\definecolor{hot}{RGB}{65,105,225}
\newtheorem{thm}{Theorem}[section]
\theoremstyle{definition}
\newtheorem{example}[thm]{Example}
\newtheorem{theorem}[thm]{Theorem}
\newtheorem{compresult}[thm]{Result}
\newtheorem{prop}[thm]{Proposition}
\theoremstyle{definition}
\newtheorem{defn}[thm]{Definition}
\newtheorem{rem}[thm]{Remark}
\numberwithin{equation}{section}
\newcommand{\Wpi}{W_\pi}
\newcommand{\bC}{{\bf{C}}}
\newcommand{\ba}{{\bf{a}}}
\newcommand{\bp}{{\bf{p}}}
\newcommand{\bb}{{\bf{b}}}
\newcommand{\bd}{{\bf{d}}}
\newcommand{\bK}{{\bf{K}}}
\newcommand{\bM}{{\bf{M}}}
\newcommand{\uFiber}{{\pi^{-1}(u^\star)}}
\newcommand{\cG}{{\mathcal G}}
\newcommand{\cC}{{\mathcal C}}
\newcommand{\cB}{{\mathcal B}}
\newcommand{\CC}{{\mathbb{C}}}
\newcommand{\Wpiq}{W_\pi(q^{\star})}
\newcommand{\Wpia}[1]{W_\pi(\alpha,#1)}
\newcommand{\Wpib}[1]{W_\pi(\beta,#1)}
\newcommand{\defcolor}[1]{{\color{NavyBlue}#1}} 
\newcommand{\demph}[1]{\defcolor{{\sl #1}}}
\newcommand{\upstairs}{\mathbb{C}^k\times\mathbb{C}^n}
\newcommand{\branchLocus}{\mathcal{B}}
\newcommand{\Gr}{G}
\DeclareMathOperator{\Tr}{Tr}
\newcommand{\Fbar}{\bar F}
\title{Solving parameterized polynomial systems with decomposable projections}
\author[Am\'endola]{Carlos Am\'endola}
\address{
Carlos Am\'endola \\
Technische Universit\"at M\"unchen, Germany
}
\email{carlos.amendola@tum.de}
\urladdr{}
\author[Lindberg]{Julia Lindberg}
\address{
Julia Lindberg \\
University of Wisconsin-Madison \\ USA
}
\email{jrlindberg@wisc.edu}
\urladdr{}
\author[Rodriguez]{Jose Israel Rodriguez}
\address{
Jose Israel Rodriguez\\
University of Wisconsin-Madison \\
 USA
} 
\email{jose@math.wisc.edu}
\urladdr{}
\begin{document}

\begin{abstract}
The  Galois  group  of  a  parameterized  polynomial  system  of equations  encodes  the  structure  of  the  solutions. This monodromy group  acts  on  the  set of solutions for a general set of parameters, that is, on the fiber  of  a projection  from  the  incidence  variety  of  parameters  and  solutions  onto  the  space  of  parameters.  When this projection is decomposable, the Galois group is imprimitive, and we show that the structure can be exploited for computational improvements. Furthermore, we develop a new algorithm for solving these systems based on a suitable trace test. We illustrate our method on examples in statistics, kinematics, and benchmark problems in computational algebra. In particular, we resolve a conjecture on the number of solutions of the moment system associated to a mixture of Gaussian distributions.
\end{abstract}

\maketitle

\section{Introduction}

A \demph{parameterized system of polynomial equations} $F=0$ arises from a polynomial map 
$F:\upstairs\to \CC^N$ where 
$\CC^k$ is the \demph{space of parameters}, 
$\CC^n$ is the \demph{space of solutions} and $N$ is the number of equations.

The polynomial map $F$
gives rise to the incidence variety:
\[
V(F) := \{
(u,z)\in \CC^k\times\CC^n :
F(u,z)=0
\}.
\]
 The projection of the incidence variety to $\CC^k$ has a fiber over a general point in the image. When this fiber is zero dimensional, its cardinality  gives a general root count to the system of equations. 

The \demph{Galois group} or monodromy group is an invariant of a general fiber, that is, an invariant of the solutions to the parameterized polynomial system. This group acts on the solutions by permuting the elements of the fiber.
When considering an irreducible component of the incidence variety,  the Galois group is known to be transitive.
With the transitivity property, one is able to use numerical homotopy continuation to collect solutions of the system if given a starting point. This powerful technique has been used in many instances \cite{Bertini,kileelchen,DelCampo,polsyshommon,tensorDecomp,dhagash,powerbus}. 
In addition, numerical algorithms for computing Galois groups have been developed in  \cite{galois,LS09Galois}, and examples from applications include formation shape control and maximum likelihood estimation in algebraic statistics. 

Many of these instances have a Galois group with special block symmetries, and we say the group is \emph{imprimitive} (see Definition \ref{def:imprimitive}).

The main theoretical connection is that the Galois group of a parameterized polynomial system is imprimitive if and only if the system has a decomposable projection (Proposition \ref{prop:structureDecomp}). 
We exploit this structure by generalizing witness sets of projections in Section \ref{s:prelim}, leading to Algorithm \ref{algo:segreTrace}.

An illustrative simple example is the following. 

\begin{example}\label{ex:first}
Let $k = n= N = 1$. Consider the curve $Z$ defined by $z^{2000}-2z^{1000}+u=0$. 
The projection map
\[
\begin{array}{c}
\pi:Z\to Y\\
\left(u,z\right)\mapsto u
\end{array}.
\]
of this curve to the $u$-coordinate is 2000 to one for all $u$ in $\mathbb{C}\setminus\{0,1\}$.
The Galois group of the
cover associated to $\pi$ is
not the full symmetric group $\mathfrak{S}_{2000}$. 
This is seen by decomposing 
the projection as the following sequence of maps:
\[
\begin{array}{c}
Z\stackrel{\alpha}{\rightarrow}V\stackrel{\beta}{\rightarrow}Y\\
\left(u,z\right)\mapsto\left(u,z^{1000}\right)\mapsto u.
\end{array}
\]
The degree of the map $\alpha:Z\to V$  is $1000$.
By setting $y=z^{1000}$, the defining equation of $V$ is seen to be $y^{2}-2y+u=0$. See Figure \ref{fig:exfirst}.
The map $\beta:V\to Y$ is a projection with degree
two. 
Thus, we have (non-trivially) decomposed the projection $\pi$ into a composition
of maps $\alpha \circ \beta$. 
One way to describe the the fiber $\pi^{-1}\left(u\right)$ is by listing all $2000$ points over a general point.
We prefer to list only $2=\deg\beta$ points that map to distinct points under $\alpha$.
Often, this description is sufficient as the other solutions are equivalent up to an easily described action. 
In this example, the action is given by multiplying the $z$-coordinate by a primitive $1000$th root of~unity. 
\end{example}

\begin{figure}[h]
\centering
\includegraphics[scale=0.3]{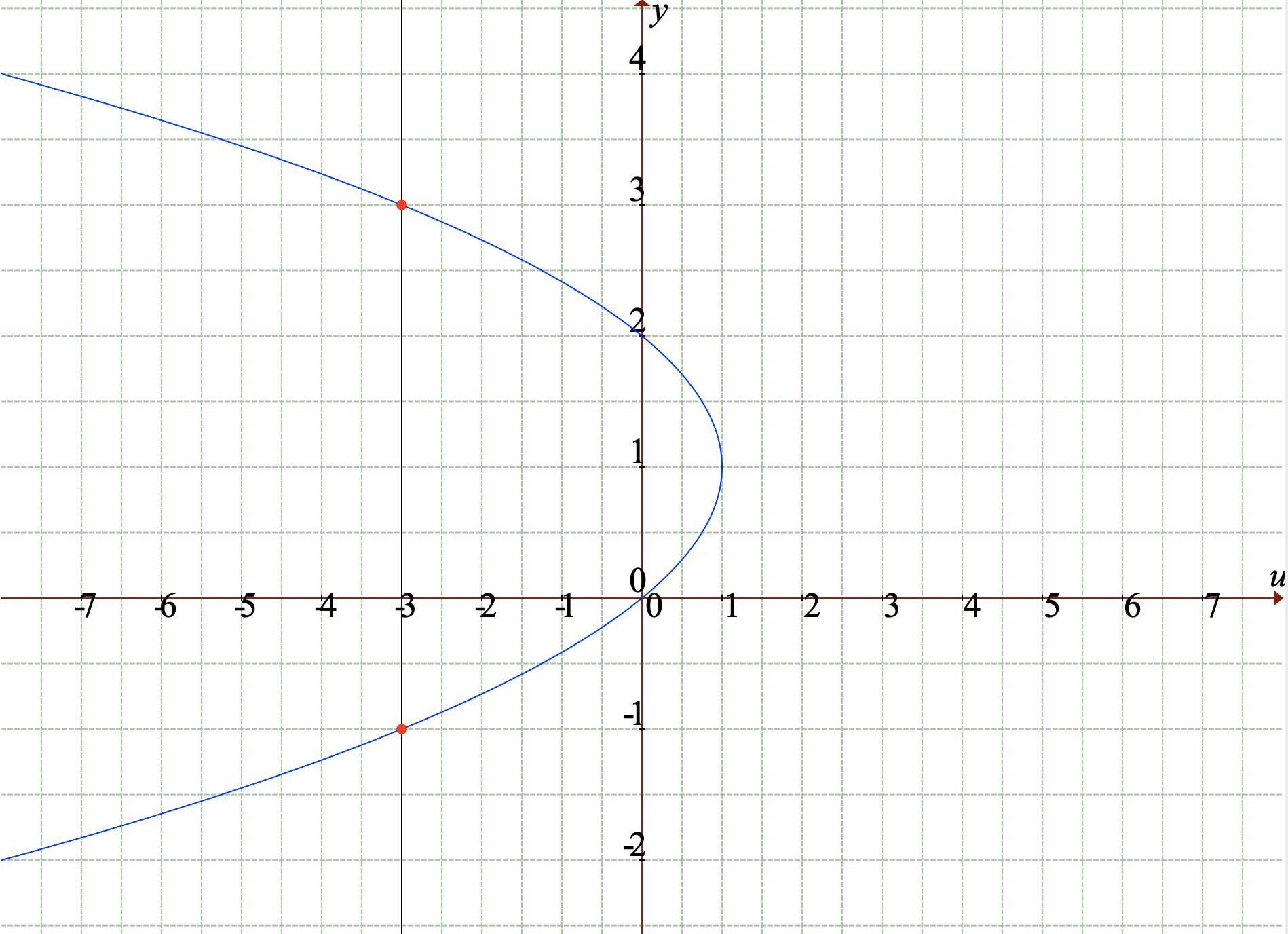}
\caption{Curve $y^2-2y+u=0$ from Example \ref{ex:first}. When $u=-3$ we obtain the $\deg \beta = 2$ intersection points $(-3,-1), (-3,3)$ of $V$.}

\label{fig:exfirst}
\end{figure}

In the  above,  eliminating the $z$-coordinate to compute the defining equation of $V$ is easily performed via substitution. 
However, in examples of Section \ref{sec:exShort}, 
this elimination is a bottleneck that  we avoid by using the numerical homotopy continuation method of monodromy.

The rest of the paper is structured as follows. 
In Section~\ref{ss:GaloisGroups}, we review monodromy groups and the notion for a group to be imprimitive. In Section \ref{ss:decomp} we define decomposability for a projection and explain its connection to imprimitivity of the Galois group. In Section \ref{sec:ws}, we define witness sets, coming from numerical algebraic geometry, in the decomposable context. We put things together in a decomposable monodromy algorithm in Section~\ref{ss:decmon}, and illustrate with elementary examples in Section~\ref{ss:examples}. In order to have a stopping criterion for the first algorithm, we introduce a trace test algorithm in Section~\ref{ss:tracetest}. Finally, Section~\ref{sec:exShort} is devoted to applications: we disprove 
a conjecture on Gaussian mixtures models,
explore a kinematics problem, and provide some computations related to the benchmark cyclic $n$-roots problem. 
 
\section{
Monodromy, decompositions, and invariants
}\label{s:prelim}

In this section, we recall basic facts about monodromy groups of parameterized polynomial systems, we define decomposability and witness sets.

\subsection{Monodromy and Galois groups}\label{ss:GaloisGroups}
\
As defined in the introduction, we will consider the incidence
variety ${V(F)}:= \{ (u,z)\in \CC^k\times \CC^n : F(u,z)=0\} $
of a square system of polynomial equations $F=0$ parameterized by $\CC^k$ and their sets of solutions in $\CC^n$.
We consider the fiber of the projection $\pi:V(F)\to\CC^k$.
We denote the fiber over a point $u^{\star}\in \pi(V(F))$ as $\uFiber:= \{ z\in \CC^n : F(u^{\star},z) = 0 \}$. 
In the cases we are interested in, we can assume this fiber is finite over a generic point in $\CC^k$.
In other words, we are assuming 
the dimension of ${V(F)}$ is $k$ and $\pi:V(F)\to\CC^k$  is dominant. 

As a consequence, we see that
$\pi:V(F)\to\CC^k$ 
is a branched cover, with a branch locus denoted by $\cB$.
Over  $\CC^k\setminus\cB$, the projection $\pi$ admits a covering space, which has a monodromy group.
This monodromy group is equivalent to the Galois group, see \cite{harris1979} for a modern reference.

\begin{defn}
Let $\gamma\subset\CC^k\setminus\cB$ denote a loop in $\CC^k$
based at $u^\ast$. 
Then, $\gamma$ induces an action on $\uFiber$. 
We denote the permutation of the fiber induced by $\gamma$ as $\sigma_{\gamma}$.
The group of such permutations is the monodromy group ${\cG_{\pi:V(F)\to\CC^k}}$.
When it is clear, we denote this group by ${\cG_{F}}$.
\end{defn}

\begin{rem}
When $V(F)$ is a curve $(k=1)$, the branch locus is a finite set of points.  
The monodromy group is generated by fixing a base point and taking simple loops around each of these branch points.
\end{rem}

\begin{prop}\label{prop:Ztransitive} 

If $Z_1,Z_2$ are distinct irreducible components of $V(F)$ such that $\pi:Z_i\to \CC^k$ is dominant and the fiber is finite,
then the monodromy group of $\pi:Z_i\to \CC^k$ is transitive for each $i$, 
and the monodromy group of $\pi:Z_1\cup Z_2 \to \CC^k$ is not transitive. 

\end{prop}
\begin{proof}
See Proposition 2.5 of \cite{galois}.
\end{proof}

The main consequence of this proposition is that we can use homotopy continuation to populate the fiber if given a starting point.
This is described as a special case of Algorithm \ref{algo:standardGeneral} and has been exploited in numerous instances as mentioned in the introduction. 
We make the assumption that one solution to a system is easy to find; this is typically done by fixing some of the variables and solving for the parameters.
Using the terminology of \cite{Pirola}, if the monodromy group of
$\pi$ is the full symmetric group
$\mathfrak{S}_d$, then we say that
$\cG_{\pi}$ is \emph{uniform}. 

Our methods focus on monodromy groups that are imprimitive; in particular these are not uniform.

\begin{defn}\label{def:imprimitive}
Let $\cG$ be a group acting transitively on a finite set $[k]=\left\lbrace 1,\ldots,k\right\rbrace$. A subset $B \subseteq [k]$ is a \demph{block} if $gB=B$ or $gB \cap B = \emptyset$ for every $g \in \cG$. We say that $\cG$ is \demph{primitive} if its only blocks are $\emptyset, [k]$ and $\left\lbrace j \right\rbrace$ for $j \in [k]$. Otherwise, $\cG$ is \demph{imprimitive}.
\end{defn}

\subsection{Decomposing a projection}\label{ss:decomp}
We have the following definition from \cite{Pirola}. 

\begin{defn} \label{def:decomp}
Let
$\pi:Z\to Y$ be a generically finite dominant map of degree
$d$
between complex algebraic varieties. 
We say that
$\pi$
is (nontrivially)
\emph{decomposable}
if there exists an open
dense subset
$U\subseteq Y$
over which
$\pi$
factors as
\begin{equation}\label{eq:decompPi}
\pi^{-1}(U)\stackrel{\alpha}{\rightarrow}V\stackrel{\beta}{\rightarrow}U
\end{equation}
where
$\alpha$
and
$\beta$
are finite morphisms of degree at least two. 
If either $\deg\alpha=1$ or $\deg\beta=1$, then we say the decomposition $\pi=\beta \circ \alpha$  is trivial.
\end{defn}

We will be interested in the case where $Z$ is an irreducible component and curve in  $V(F)$ with $F:\CC\times \CC^n\to \CC^n$.
It follows that if $\pi$ is decomposable then there is an intermediate cover. 
This leads directly to the following proposition.

\begin{prop}\label{prop:structureDecomp}
The  projection $\pi: Z\to\CC$ is decomposable as in \eqref{eq:decompPi} if and only if the Galois group $G_\pi$ is imprimitive.
Moreover,  if $\pi:Z\to\CC$ is decomposable, then $\cG_\pi$ is a subgroup of a wreath product $\mathfrak{S}_a\propto\mathfrak{S}_b$ where $a=\deg\alpha$, $b=\deg\beta$,
and the Galois group $\cG_{\beta:V\to U}$ is a transitive subgroup of $\mathfrak{S}_b$.
\end{prop}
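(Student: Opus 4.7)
The plan is to pass through the classical Galois correspondence for the function field extension $\mathbb{C}(\cC)/\mathbb{C}(\Pt)$, taken inside its Galois closure. Fix a general $u\in\Pt$, a point $x\in\pi^{-1}(u)$, and let $H=\mathrm{Stab}_{G_\pi}(x)$. I would establish bijective correspondences between (a) decompositions $\cC\xrightarrow{\alpha}V\xrightarrow{\beta}\Pt$ up to birational equivalence, (b) intermediate subfields $\mathbb{C}(\Pt)\subseteq F\subseteq\mathbb{C}(\cC)$, (c) subgroups $H\leq K\leq G_\pi$, and (d) $G_\pi$-invariant block systems on $\pi^{-1}(u)$. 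Under these correspondences, the strict chains $H<K<G_\pi$ (equivalently, nontrivial block systems, equivalently imprimitivity of $G_\pi$) match the decompositions of Definition \ref{def:decomp} where both $\deg\alpha,\deg\beta\geq 2$. Chaining the bijections then yields the biconditional statement of the proposition.

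To make the bijection (a)$\leftrightarrow$(d) concrete, for the forward direction I would note that a decomposition $\pi=\alpha\circ\beta$ induces the partition $\pi^{-1}(u)=\bigsqcup_{v\in\beta^{-1}(u)}\alpha^{-1}(v)$, which is preserved by every monodromy loop $\gamma$ at $u$ since $\gamma$ lifts through $\alpha$; hence the fibers of $\alpha$ are blocks of size $a=\deg\alpha$. For the converse, starting from a nontrivial block $B\subseteq\pi^{-1}(u)$, I would let $K\leq G_\pi$ be the setwise stabilizer of $B$ and take $V$ to be the smooth projective curve whose function field is the fixed field of $K$ in the Galois closure; the resulting maps have $\deg\alpha=|B|\geq 2$ and $\deg\beta=[G_\pi:K]\geq 2$. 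The main technical step is the existence of $V$ as an algebraic (rather than merely analytic) curve producing the intended decomposition; this is standard for curves, and I would invoke it from \cite{Pirola} rather than rebuild it.

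Finally, for the wreath-product embedding, once a decomposition with $a=\deg\alpha$ and $b=\deg\beta$ is fixed, each $\sigma_\gamma\in G_\pi$ is determined by the induced permutation of the $b$ fibers of $\alpha$ together with, for each such fiber, the induced permutation of its $a$ points; labeling fibers and points gives an injection $G_\pi\hookrightarrow\mathfrak{S}_a\propto\mathfrak{S}_b$. The projection of this embedding onto the $\mathfrak{S}_b$ factor is exactly the action of $G_\pi$ on $\beta^{-1}(u)$, whose image is $\cG_{\beta:V\to U}$; this image is transitive by applying the earlier transitivity proposition to $\beta$, since $V$ is irreducible. The principal obstacle is the converse direction, namely producing $V$ as an actual algebraic curve from a combinatorial block system, but for curve covers this reduces to classical Riemann-surface/field-theoretic machinery and no new arguments are required beyond citing it.
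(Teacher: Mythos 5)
Your proposal is correct and follows essentially the same route as the paper: the paper's (much terser) proof also rests on the Galois correspondence between intermediate fields and subgroups, with the ``moreover'' coming from irreducibility of $\cC$ (hence of $V$) and dominance of the projection. You have simply spelled out the chain of bijections (decompositions, intermediate fields, subgroups above the point stabilizer, block systems) and the wreath-product embedding that the paper leaves implicit.
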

\begin{proof}
The first part is immediate with Galois theory by using the  one-to-one correspondence between the intermediate subfields of the field extension induced by $\pi$ and the subgroups of $\cG_\pi$. 

 The second part follows by our assumption that $Z$ is an irreducible curve and that the projection to $\CC$ is dense. 
\end{proof}

We will use Proposition \ref{prop:structureDecomp} in Algorithm~\ref{algo:standardGeneral}. 

\begin{defn}
    For a parametric polynomial system 
    $F:\upstairs\to \CC^n$ 
    define the projection map $\pi:V(F)\to \CC^k,\,(u,z)\mapsto u$.  
    We say $F$ is \demph{decomposable} w.r.t 
    $g:\CC^n\to \CC$
    if there exists a dense Zariski open subset $U$ of $\CC^k$
    such that $\pi$ factors as 
    \[
    \pi^{-1}(U)\stackrel{\alpha}{\rightarrow}  
    \alpha(\pi^{-1}(U))
    \stackrel{\beta}{\rightarrow}U
    \]
    where $\alpha ( u, z ) = (u, g(z))$, $\beta(u,y)=u$.
\end{defn}

In the definition, it is important that the first coordinates are consistently $u$, otherwise the composition $\beta \circ \alpha$ does not necessarily decompose $\pi$.
For most choices of $\alpha$, we have $\deg\alpha$ is one and will not yield a nontrivial decomposition of the projection. 
To find a nontrivial $\alpha$ (when they exist), one can employ algorithms in invariant theory or decomposition of polynomials.

\begin{rem}[Fundamental Invariants]\label{rem:invariants}
Given a parametric polynomial system $F$, one can ask how to search for a suitable polynomial function $g$ that makes $F$ decomposable.
When $F$ is invariant under a finite group action $\Gr$, 
this can be answered using fundamental invariants. We can take the polynomial map $g$ to be a random linear combination of a finite set of fundamental invariants. In practice, it is often enough to take $g$ to be a single low degree invariant, as illustrated in Subsection~\ref{subsec:gaussian}. 

To obtain fundamental invariants, one can use for instance the \emph{Reynolds operator} from computational invariant theory \cite{DK2002CIT,Sturmfels2008AIT}. The Reynolds operator takes a polynomial and maps it to an invariant: 
$f\mapsto \frac{1}{|\Gr|}\sum_{\sigma \in \Gr}  \sigma (f)$.
When the system $F$ is invariant under $\Gr$, which is usually easy to check, we can often find a $g$ to decompose $F$ by applying the Reynolds operator to a generic polynomial of suitable degree. For instance, in Example \ref{cyclicn} we see that the system is invariant under the dihedral group $D_n$, so we can use this technique. 
\end{rem}

\subsection{Homotopy continuation}\label{sec:ws}

Homotopy continuation is one of the central tools in numerical algebraic geometry. 
A homotopy uses a numerical predictor corrector method to deform a solution to one set of equations to another. 
We want to prescribe homotopies that take advantage of the structure of the system to improve computational performance.
This can be done in a number of ways.
For example, polyhedral methods use the Newton polytope structure of the system and regeneration uses the equation by equation structure. These methods have led to off the shelf software \cite{PHC,HomPS,homotopyjl}, and \cite{Bertini} respectively. 

A homotopy with path-parameter $t$, is given by
$H_T:\CC\times \CC^n\to\CC^n, (t,z)\mapsto H_t(z)$.

Witness sets are a fundamental data structure in numerical algebraic geometry to describe varieties.
The standard witness set consists of a witness point set, a linear space, and equations \cite{SWbook}. 
When the variety has more structure, additional information can be included in the witness set. 
This information can include multiplicity like when using deflation \cite{Deflation,LeykinDeflate} or multiprojective structure~\cite{HRmulti}. 

In many instances, one does not have  defining  equations for the ideal of a variety. 
One such case is when a variety is given by the image of a projection; these are described by pseudo witness sets \cite{HSpseudo} or witness sets of projections  \cite{Witness}. 
For irreducible curves, which is the case we reduced to in Section \ref{s:prelim}, 
we recall the concept of witness sets of projections.
Then, we introduce witness set factors for decomposable projections. These are much in the same vein as pseudo witness sets. They describe fibrations and sections of fiber bundles. 

\begin{defn}\label{def:witness}
Let $F:\upstairs\to \CC^n$ be a parametrized polynomial system, $Z \subseteq \CC^k\times \CC^n$ an irreducible subvariety and $\pi:\upstairs\to\CC^k$ be the projection given by $\pi(u,z)=u$.
The \demph{witness set} $W_{\pi}(Z)$ of $\pi$ restricted to $Z$,
 consists of the following three pieces of information:
$$\{
F, \,
q^{\star},\,
\Wpiq
\}\quad \text{ where } \Wpiq:=\{ z\in \CC^n : F(q^{\star},z  )= 0 \,\text{and}\, (q^{\star},z)\in Z  \}$$
where $q^{\star}$ is a general point in the image $\pi(Z)\subset \CC^k$. 
When the context is clear, we denote it simply by $\Wpi$.
The set $\Wpiq$ is said to be a \emph{witness point set} and its elements \emph{witness points}.
\end{defn}

With $\Wpi$, we are able to easily describe the fiber over another point $q' \in \CC^k$. 
From the witness set $\Wpi$, we use  a homotopy to deform $q^{\star}$ to $q'$ which  deforms the witness point set $\pi^{-1}(q^{\star})$.
Doing so, every nonsingular isolated point of the fiber $\pi^{-1}(q')$ will be a limit point of one of the deformed witness points \cite{SWbook}. 

When a projection $\pi = \beta \circ \alpha$ is decomposable, 
the witness set $\Wpi$ has extra structure that we capture with two witness point subsets:  
\begin{itemize}
    \item $\Wpia{q^\star} \subset \Wpiq$ consists of $\deg \alpha$ points that map to a single point under $\alpha$.
    \item $\Wpib{q^\star} \subset \Wpiq$ consists of $\deg \beta$ points that map to distinct points under $\alpha$.
\end{itemize}
We call such witness points subsets an \emph{$\alpha$-factor} and a \emph{$\beta$-factor} of $\Wpiq$, respectively.

\begin{rem}\label{cor:decWS}
Using the notation in Definition~\ref{def:witness},
suppose $\Wpi$ has a witness point set with $ab$ witness points and imprimitive Galois group ($a,b>1$).
Consider subsets  $A$, $B$ of $\Wpi$ consisting of 
 $a$ and $b$ distinct witness points of $\Wpi$ respectively. 
Then, $A$, $B$ are an $\alpha$-factor and a $\beta$-factor respectively for $\Wpi$ if and only if the following occur: \\
(1) $A$ is a block, i.e. for each $\gamma\in\cG_\pi$,  the intersection $\gamma\cdot A\cap A$ is empty or  $A$.\\
(2) $B$ is a set of representatives for the partition by the blocks $\left\lbrace \gamma A \right\rbrace _{\gamma\in\cG_\pi}$, i.e. for each $\gamma\in\cG_\pi$, the intersection $\gamma\cdot A\cap B$ is precisely one point. 
\end{rem}

\begin{example}
Recall the curve $Z$ from Example \ref{ex:first}. The witness set of $\pi:Z\to\CC$ consists of $2000$ points. 
For $u=-3$, 
the following are factors of the witness set $\Wpi(-3)$:
$$
\Wpia{-3} = \left\{i\zeta,i\zeta^2,\dots,i\zeta^{1000}\right\}
\text{ and }
\Wpib{-3} = \left\{i\zeta,\sqrt[1000]{-3}\right\},$$
where $\zeta$ is a primitive $1000$th root of unity. 
These sets consist of a thousand and two witness points, respectively.
Remark \ref{cor:decWS} leads to computational improvements  in the following sense. 
Suppose we were given  $\Wpia{-3}$ and wish to compute a $\beta$-factor $\Wpib{-3}$. 
In this example,  this means finding a point in $\Wpia{-3}$ and $\Wpib{-3} \setminus \Wpia{-3}$. 
Let $\gamma$ denote a path in $\CC \setminus\{0,1\}$.
It would be a waste of resources to use homotopy continuation to track every point of $\Wpia{-3}$ along $\gamma$. According to Remark \ref{cor:decWS}, if tracking $\Wpia{-3}$ along $\gamma$ produces an end point in $\Wpi \setminus \Wpia{-3}$, then  tracking any single representative of $\Wpia{-3}$ along $\gamma$ produces an end point in  $\Wpi \setminus \Wpia{-3}$.
\end{example}

The notion of witness set factors generalizes naturally to projections with more than two factors in their decomposition. 
Suppose $\pi$ decomposes as $\alpha_\ell \circ \cdots\circ\alpha_2 \circ \alpha_1$ with witness set $\Wpi$. An \emph{$\alpha_i$th-factor} of $\Wpi$ is a set $\Wpi(\alpha_i)$ of $\deg\alpha_i$ distinct witness points of $\Wpi$ satisfying the following properties: 
\begin{enumerate}
\item the map  $\alpha_{i-1} \circ\cdots\circ\alpha_{1}$ on  $\Wpi(\alpha_i)$ is injective,
\item the image of $\Wpi(\alpha_i)$ under $\alpha_i \circ \alpha_{i-1} \cdots\circ \alpha_1$ is one point. 
\end{enumerate}

\begin{example}
We illustrate the decomposition of a projection into multiple factors.
Let $\alpha_i(u,z)=(u,z^2)$ for $i=1,2,3$, $\alpha_j(u,z)=(u,z^5)$ for $j=4,5,6$, and 
$\alpha_l(u,z)=u$ for $\ell=7$.
Then, the projection $\pi:Z\to\CC$ from Example \ref{ex:first} decomposes into 
$\pi=\alpha_7\circ\alpha_6\circ\alpha_5\circ\alpha_4\circ\alpha_3\circ\alpha_2\circ\alpha_1$.
The witness point sets for each of these factors corresponding to $\alpha_1,\dots,\alpha_7$ consist of $2,2,2,2,5,5,5$ points respectively. 
\end{example}

In this article, we use monodromy to compute the witness set factors.

\section{Computing a fiber of a decomposable projection}
In this section we will give a general monodromy algorithm to populate a fiber and a trace test algorithm as a stopping criterion. 

\subsection{Decomposable monodromy algorithm}\label{ss:decmon}

Our aim is to compute a subset $S$ of the solution set to $F(u^{\star},x)=0$
for generic $u^{\star}$. We also want to make use of the information that the system is decomposable.

To this end, we present Algorithm~\ref{algo:standardGeneral}, which computes solutions via monodromy loops, but only keeps track of solutions that map to different images under a polynomial $g: \CC^n \rightarrow \CC$. 

We show that this algorithm is consistent with the setting of a parametrized polynomial system $F$ with respect to $g$ in Theorem~\ref{thm:decomposablemonodromy}.

\begin{algorithm}
\DontPrintSemicolon 
\KwIn{
    \;
    Parametric polynomial system:  $F:\upstairs\to \CC^n$.\;
    General parameters: $u^\star\in \CC^k$.\;
    Start solutions: A nonempty finite subset $S^\star$ of 
    $\{ x\in \CC^n : F(u^\star,z ) = 0  \}\subset\CC^n$.\;
    Polynomial map: $g:\CC^n\to \CC$.\;
    Stopping criteria:  $\bC$. 
}
\KwOut{A finite subset of $\{ z\in \CC^n : F(u^\star,z) = 0  \}\subset\CC^n$.}
\While{the criterion $\bC$ is \texttt{False} \;}{
    Let $\branchLocus\subset\CC^k$ be the branch locus. \;
    Set $\gamma$ to be a loop in $\CC^k\setminus \mathcal{B}$ beginning at $u^\star$. \;
	Do a parameter homotopy along $\gamma$  with start points $S^{\star}$ to obtain endpoints $E$. \;
	\For{ each point $p$  in $E$  \; }{
		\If{$g(p)\not\in g(S^{\star}))$\label{step:lose} \;}{$S^{\star}\leftarrow S^{\star}\cup \{ p \}$.\; }
		}	
	}
\Return{ $S^{\star}$.}\;
\caption{Decomposable monodromy algorithm}
\label{algo:standardGeneral}
\end{algorithm}

\begin{rem}[Trivially decomposable]
    When $g$ is the identity map, i.e., $g(z)=z$ 
    this is the standard monodromy as seen in \cite{polsyshommon,Monodromy}. 
    We call this the \demph{Classical Monodromy Method}. 
\end{rem}

\begin{theorem}\label{thm:decomposablemonodromy}
Using the notation in Algorithm~\ref{algo:standardGeneral},
if $u^{\star}$ is general and $S^{\star}$ is a single point, 
then the output is contained in a unique irreducible component $Z$ of $V(F)$. 
Moreover, there exists a sequence of loops such that the output is $W_\pi(u^{\star}).$
\end{theorem}

\begin{proof}
We have that the monodromy group of $F$ acts transitively on $W_\pi(u^{\star})$ for a general point $u^{\star} \in \pi(Z)$ by Proposition~\ref{prop:Ztransitive}. Hence, if $g$ is the identity map, there exists a sequence of loops where the output is the entire set of solutions
$$\{ z\in \CC^n : F(u^\star,z) = 0 \text{ and } (u^{\star},z)\in Z\}.$$ For an arbitrary $g$, the output need not return this entire set of solutions: if the distinct solutions $p,q$ are such that $g(p)=g(q)$, then Algorithm~\ref{algo:standardGeneral} only returns one of them according to step~\ref{step:lose}. Indeed, the algorithm returns a set of solutions that have distinct images under $g$. This is precisely what is needed to have a witness set
$$\Wpi(u^\star)=\{ z\in \CC^n : F(u^{\star},z  )= 0 \,\text{and}\, (u^{\star},z)\in Z  \},$$
by definition of decomposability with respect to $g$.
\end{proof}

\begin{example}
Consider the parametric polynomial system given by the equation
$F = z^6+z^4+z^2+u =0$.
This is decomposable with respect to $g(z)=z^2$. The image under $\alpha(u,z)=(u,z^2)$ is given by the curve in $\CC^2$ defined by 
$$y^3+y^2+y+u=0.$$
The steps of Algorithm~\ref{algo:standardGeneral} are illustrated in Figure~\ref{fig:triple-coordinates}.
The loop $\gamma$ is given by the unit circle in $u$-space beginning at the red point of Figure~\ref{fig:triple-coordinates}(c). 
This loop avoids the branch locus 
$V(u(27u^2-14u+3))$.

Tracing around $\gamma$ once lifts to a path in the $z$-complex plane and $y$-complex plane connecting two red points. 
So tracing around $\gamma$ six times gives one revolution around the ``square" in Figure \ref{fig:triple-coordinates}(a) but two revolutions around the ``triangle" in Figure \ref{fig:triple-coordinates}(b). If $g(z)=z$ is the identity, the algorithm 
will return all six solutions. 
If $g(z)=z^2$, the algorithm returns three solutions which have distinct $y$-coordinates. 
\end{example}

\begin{figure}[h]
\centering
\subfigure[]{
\includegraphics[scale=0.6]{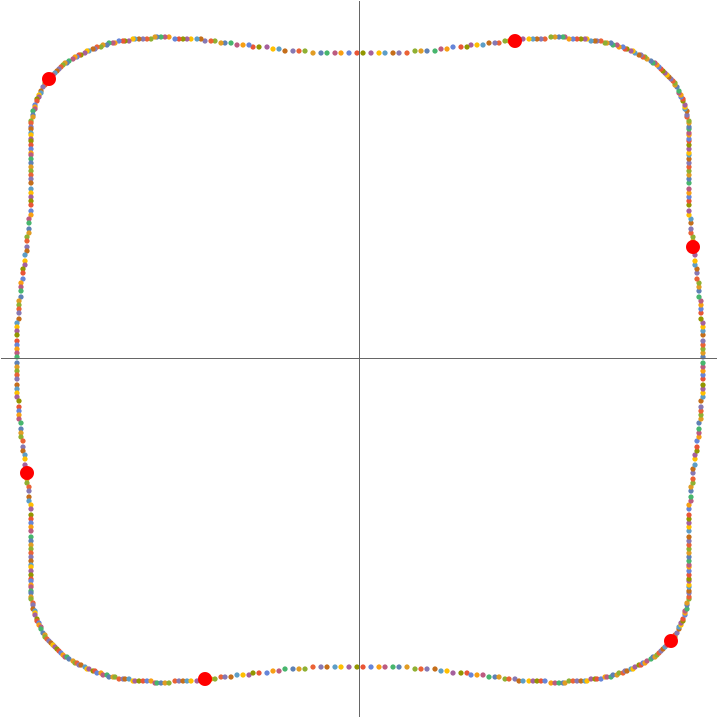}
\put(-220,210){{$z$-complex plane}}
}
\subfigure[]{
\includegraphics[scale=0.6]{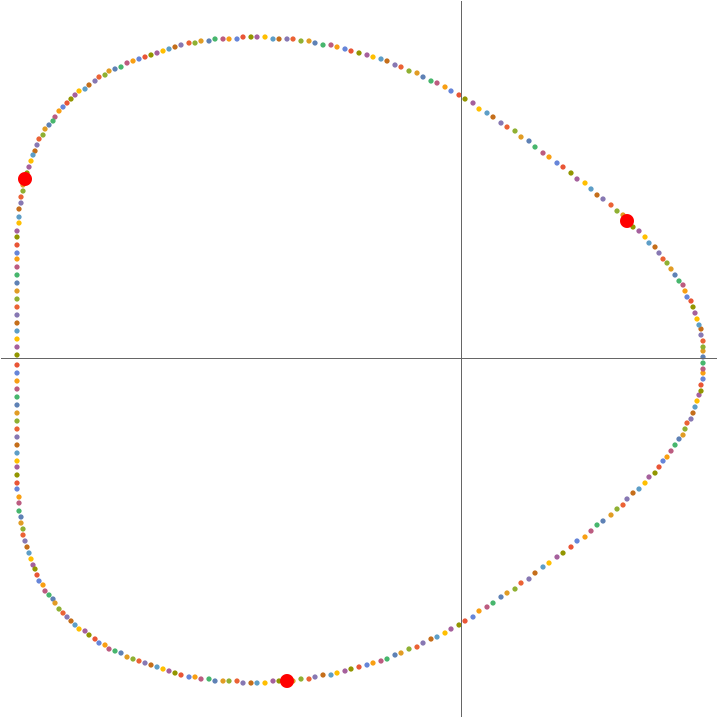}
\put(-220,210){{$y$-complex plane}}
}
\subfigure[]{
\includegraphics[scale=0.6]{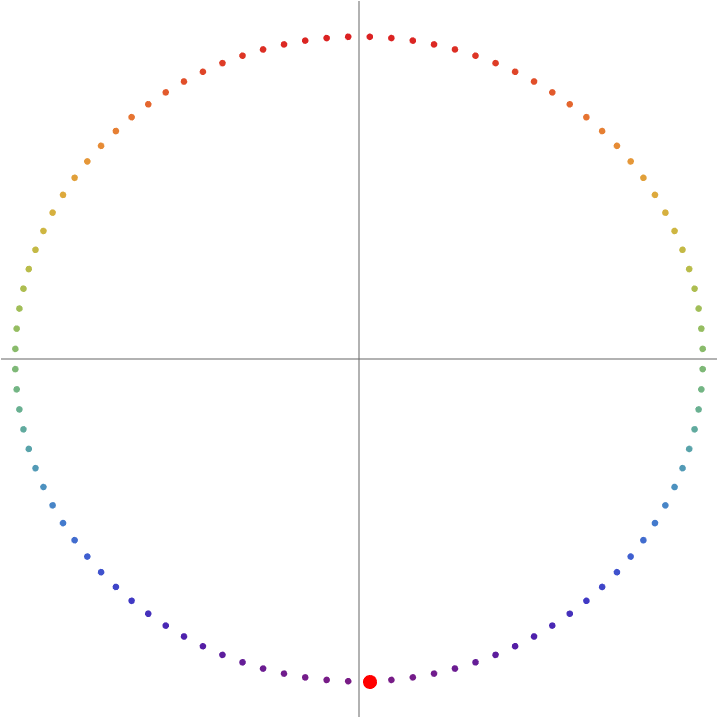}
\put(-111,105){{$\star$ origin }}
\put(-90,84){{$\star$  
{\tiny $0.259-0.210\sqrt{-1}$
}}}
\put(-90,136){{$\star$ {\tiny $0.259+0.210\sqrt{-1}$}}}
\put(-220,210){{$u$-complex plane}}
}
\caption{Illustration of Algorithm~\ref{algo:standardGeneral} 
}
\label{fig:triple-coordinates}\end{figure}

\begin{rem}[Choosing loops]
While taking a loop given by concatenating random line segments avoids the branch locus with probability one, a systematic choice of loops can lead to computational savings during implementation.
\end{rem}

\begin{rem}[Stopping criteria implementation]
The framework that we have provided is flexible and allows for numerous types of stopping criteria. 
Some typical criteria count the number of times the \texttt{while} loop is entered, an upper bound on the number of solutions, or the wall time of the computation. 
In our implementation, the default stopping criteria are:
(1) the while loop did not find any new solutions ten times in a row
 or 
(2) the number of solutions found equals or exceeds $D$ which is chosen accordingly.
\end{rem}

\subsection{Elementary examples}\label{ss:examples}
\begin{example}\label{ex:firstFollowUp}
Following up on   Example \ref{ex:first},  
consider the curve $\cC\subset\CC^2$ defined by $z^{2000}-2z^{1000}+u$. 
The branch locus for the projection  $\pi:\cC\to\CC$ consists of two points: $u=0$ and $u=1$.
The critical locus can be written as the  intersection of two ideals:
$$
(z^{2000}-2z^{1000}+u,2000z^{1999}-2000z^{999})=(u,z^{999})\cap(u-1,z^{1000}+1).
$$
Let $\gamma_0,\gamma_1$ denote loops based at a general point of $\CC$ that encircle $u=0$ and $u=1$ respectively.
These loops induce the following two permutations:
$$
(1,2)(3,4)(5,6)\cdots(1999,2000)\quad{\text{and}}\quad(1,3,5,\dots,1999)(2,4,6,\dots,2000).
$$
From this set of generators,
one sees the Galois group $\cG_\pi$ is imprimitive: the odd and even numbered solutions form two nontrivial blocks. Indeed, note that the odd (even) numbered solutions are permuted amongst themselves or are taken to even (odd) numbered solutions. 
\end{example}

\begin{example}\label{ex:mv}
Consider the parameterized system of equations $F:\CC^6\times\CC^2\to \CC^2$
given~by
\begin{equation}\label{eq:mv}
(u_1+u_2(z_1z_2^2+z_1^2z_2))z_1z_2=0\quad\quad
u_4+u_5(z_1+z_2)+u_6z_1z_2=0.
\end{equation}

We are only interested in the component $Z$ of $V(F)$ not contained in a coordinate hyperplane. 
The projection $\pi$ of $Z$ to $\CC$ has a fiber of four solutions corresponding to the mixed volume of the system.
Let $\alpha(u,z_1,z_2)=(u,z_1+z_2)$.
Using decomposable monodromy, we find $\Wpib{u}$ has two points, which is less than the mixed volume.
\end{example}

\subsection{A trace test stopping criterion}\label{ss:tracetest}
An immediate question in regards to Algorithm~\ref{algo:standardGeneral} 
is what can we use as a stopping criterion. 
To answer this, we have our next algorithm, which involves a trace test~\cite{tracetest,HRmulti,TraceTesto}. 

However, since we cannot use a trace test directly on the parametric system $F: \upstairs \to \CC^n$, we will modify it to get a new system. 

Let $\bar F $ be the system of $n$ polynomial equations in $n+1$ unknowns $(v,z) \in \CC \times \CC^{n}$ given~by 
\begin{equation}\label{eq:Fbar}
\bar F(v, z) := F(L(v), z)
\text{ where }  
L:\CC\to \CC^k \text{ is a general affine linear function}.
\end{equation}
Clearly, $\bar F$ is underdetermined.

\newcommand{\constraintA}{A}

To get a parametric square system of equations, 
we introduce a new constraint and a single parameter $t$.
For generic linear functions $\ell_1,\ell_2:\CC\to \CC$ we construct the following nonlinear constraint that depends on 
$t$ and a polynomial $g(z)$:
\begin{equation}\label{eq:A}
\constraintA(t,v,z) =  \ell_1(v)\ell_2(g(z))-t.
\end{equation}

 The one parameter system of equations 
\begin{equation}\label{eq:Fbar}
\bar F(v,z)=0 , \constraintA(t,v,z)=0
\end{equation}
is a square system. 
When $t=0$, 
the system factors as 
\begin{equation}\label{eq:Fell}
\bar F(v,z)=0,\, \ell_1(v)=0
\text{  and }
\bar F(v,z)=0,\, \ell_2(g(z))=0.
\end{equation}

The former system corresponds to the solutions of  $F(u,z)=0$ where $u=L(v)$ and $v$ is fixed.  
The latter corresponds to solutions of $F(u,z)=0$ with the parameters restricted to the line $u=L(v)$ and $g(z)$ fixed to be a generic value.

Algorithm~\ref{algo:standardGeneral}
(or some other black box polynomial system solving method)
can be used to find subsets of the set of solutions for each of the systems by introducing the parameters  $t_1,t_2$:
\begin{equation}\label{eq:monodromyFell}
\bar F(v,z)=0,\, \ell_1(v)=t_1
\text{  and }
\bar F(v,z)=0,\, \ell_2(g(z))=t_2.
\end{equation}
Since $\ell_i$ is a generic affine linear function, the parameters $t_1=0$ and $t_2=0$ are generic.

Denote by  $S$ a subset of solutions $W$ to 
$\bar F(v,z)=0 , \constraintA(t,v,z)=0$
with $t=0$. 
For $t=\pm 1$
Let $S(\pm1 )$ denote the set of endpoints of a homotopy with $t$ varying from 0 to $\pm 1$ with start points $S$.

With the sets of points $S(0):=S$, $S(-1)$, and $S(1)$
we can do a trace test to verify we have found all of the solutions. 
 
\begin{algorithm}
\DontPrintSemicolon
\KwIn{
    \;
    Nonempty subsets $S(0)$, $S(-1)$, and $S(1)$ in $\CC^{n+1}\cong\CC\times \CC^n$\;
    A polynomial map $g:\CC^n\to \CC$
}
\KwOut{
    A nonnegative real number\;
    }
Denote by $\Tr(j)\in \CC^3$ the coordinate-wise average of the set of points 
$\{ (v,g(z),vg(z)) : (v,z)\in  S(j)
\}$.\;
Set $\epsilon$ to  $
||(\Tr(1)-\Tr(0)) - (\Tr(0)-\Tr(-1))  ||.
$
\;

\Return{ $\epsilon$}\;
\caption{Pseudo-Segre Trace test}
\label{algo:segreTrace}
\end{algorithm}

We call this a \emph{Pseudo-Segre} trace test because we use the image coordinate $g(z)$ as opposed to just $z$, and we use the traces of a curve in a Segre embedding. 
\begin{rem}[Degree of an affine curve]\label{rem:affineTT}
    A special case of the previous algorithm is a classic technique in numerical algebraic geometry to verify the degree of an irreducible affine curve. Let $X$ be a curve in $\mathbb{C}^n$ that is an irreducible component of 
    $V(h_1,\dots,h_{m})$ and let $z^\star$ denote a generic point of $X$.
    For generic $t^\star\in \CC$, and generic $(b_1,\dots, b_n)\in \CC^n$ apply Algorithm \ref{algo:standardGeneral} to the parameterized polynomial system 
    \[
    F(t,z) =   [ h_1(z)=0  ,  \dots, h_{m}(z)=0,  
    \sum_{i=1}^n b_i (z_i-(z^\star)_i)   =t-t^{\star} 
    ]
    \]
    with $g(z)=z$.
    It has been shown in \cite{TraceTesto,tracetest} that Algorithm~\ref{algo:segreTrace} returns $\epsilon=0$ if and only if $S(0)=W$.

    We recall that in our general situation for decomposable parametric polynomial systems, the standard trace test cannot be applied directly.
\end{rem}

\begin{theorem}
We use the preceding notation in this subsection and assume $V(F)$
is irreducible. 
Let $W\subset \CC^{n+1}$ be the set of solutions to the system 
\[
\bar F(v,z)=0, \quad \constraintA(0,v,z)=0.
\]
Let $\psi:\CC^{n+1}\to \CC^3$ be the map 
$\psi(v,z) = (v,g(z), vg(z))$, 
and suppose $\psi$ restricted to  
$S(0) \subseteq W$ 
is one to one. 
Then $\psi(S(0)) =   \psi(W)$ 
if and only if
the output of Algorithm~\ref{algo:segreTrace} is 
$\epsilon = 0$.
\end{theorem}

\begin{proof}
The main idea of the proof is to reduce to the the case where we are using a trace test to verify the degree of an affine curve in $\CC^3$ like in Remark~\ref{rem:affineTT}.

The polynomial system $F(u,z)=0$ defines an irreducible variety in $\CC^k\times \CC^n$.
In \eqref{eq:Fbar} we restrict the parameter space $\CC^k$ to a general line using $L:\CC\to\CC^k$  parameterized by $v$. By treating $v$ as an unknown, 
we have $\Fbar(v,z)=0$ defining an algebraic variety in $\CC^{n+1}\cong\CC\times \CC^n$, which is in fact an irreducible curve by Bertini's theorem.

Recall $\ell_1(v)$ and $\ell_2(g(z))$ from~\eqref{eq:A}.
Define $Y$ to be the graph of the map 
\[
\pi: V(\Fbar)\to \CC,
\quad 
(v,z)\mapsto t= \ell_1(v) \ell_2(g(z)).
\]
The variety $Y$ is irreducible because $V(\Fbar)$ is irreducible. Moreover, $V(\Fbar)$ is defined by the system ~\eqref{eq:Fbar}.
Thus, $W$ is precisely  $\pi^{-1}(0)$.

On the other hand, using $g$, we map $V(\Fbar)$
into $\CC^2\cong\CC\times \CC$ by 
$\bar \alpha (v,z):=(v,g(z))$.
The variety $\bar \alpha(V(\Fbar))\subset\CC^2$ has  coordinate projections $\pi_1(v,g)=g,\pi_2(v,g)=v$.
The degrees of these coordinate projections are say $d_1$ and $d_2$ respectively. 
So we can assume the curve $\bar \alpha(V(\Fbar))\subset\CC^2$
is defined by a bivariate  polynomial  in $(v,g)$ with bidegree $(d_1,d_2)$.

Since $\ell_1,\ell_2:\CC\to \CC$ 
are general affine linear functions, it follows
\[
\bar \alpha(V(\Fbar))\cap 
V(\ell_1(v)\ell_2(g))
\subset\CC^2
\]
consists of $d_1+d_2$ points. 
More importantly, 
by taking an affine chart of a Segre embedding,  we map $\CC^2\to \CC^3$ by $\sigma(v,g)=(v,g,vg)$. 
With this embedding the bi-degree $(d_1,d_2)$ curve $\bar \alpha(V(\Fbar))$ is a degree $d_1+d_2$ curve in $\CC^3$.  
Moreover, in the $\CC^3$ coordinates and for $t\in \CC$, 
the bilinear constraint $\ell_1(v)\ell_2(g)=t$ defines a general hyperplane $H_t$ in $\CC^3$.

In summary the intersection points of the curve $\sigma(V(f))$ with hyperplane $H_t$ in $\CC^3$
is $\psi(W)=\sigma(\bar \alpha (W))$, 
which is the intersection points of an affine curve with a hyperplane. 
We can use a standard trace test from Remark~\ref{rem:affineTT} to verify we
have found all points of intersection in $\CC^3$, i.e., that $\psi(S(0)) =   \psi(W)$. The (exact) trace test is successful if and only if $\epsilon = 0$ as mentioned in Remark~\ref{rem:affineTT}.

This completes the proof because we assume $\psi$ restricted to $S(0)$ is one to one. 
\end{proof}

\begin{rem}[Separable Solve Method]\label{rk:separable}
    Since the polynomial system $\bar F$
    in equation~\ref{eq:monodromyFell} factors when $t=0$, we exploit this fact to solve 
    these two systems independently using  
Algorithm~\ref{algo:standardGeneral}, followed by Algorithm~\ref{algo:segreTrace}. 
Empirically, this speeds up performance by a factor of two for difficult problems and we call this the \demph{Separable Method} in our computational~results.
\end{rem}

\section{Applications}\label{sec:exShort}

In the first subsection we have a case study on a moment system. 
In the second subsection we present computational results motivated by kinematics. 
In the last subsections, we have a case study on the cyclic $n$-roots problem up to $n=9$.

\subsection{Case Study: Gaussian Mixtures}\label{subsec:gaussian}

An example from statistics where polynomial systems with symmetry arise naturally is the moment equations of Gaussian mixture distributions. For history and context of this problem, see \cite{AFS}. 

The first non-trivial instance of this problem involves the five moment equations corresponding to a mixture of two univariate Gaussians:

\begin{equation}
\label{eq:karlpara}
 \begin{matrix}
m_0 & = & \lambda_1 + \lambda_2 \\ 
m_1 & = & \lambda_1 \mu_1 + \lambda_2 \mu_2 \\
m_2 & = & \lambda_1 (\mu_1^2 + \sigma_1^2) + \lambda_2 (\mu_2^2 + \sigma_2^2) \\
m_3 & = & \lambda_1 (\mu_1^3 + 3 \mu_1 \sigma_1^2) + \lambda_2 (\mu_2^3 + 3 \mu_2 \sigma_2^2) \\
m_4 & = & \lambda_1 (\mu_1^4 + 6 \mu_1^2 \sigma_1^2 + 3 \sigma_1^4) 
          + \lambda_2 (\mu_2^4 + 6 \mu_2^2 \sigma_2^2  + 3 \sigma_2^4) \\
m_5 & = & \lambda_1 (\mu_1^5 + 10 \mu_1^3 \sigma_1^2 + 15 \mu_1 \sigma_1^4) 
          + \lambda_2 (\mu_2^5 + 10 \mu_2^3 \sigma_2^2  + 15 \mu_2 \sigma_2^4). \\
\end{matrix}
\end{equation}
 
The indeterminates are $\lambda_1,\lambda_2, \mu_1, \mu_2, \sigma_1^2, \sigma_2^2$, and $m_0, m_1,m_2,m_3,m_4,m_5$ are the parameters, which correspond to given numerical moments. Note that if we have a solution  $(\lambda_1,\lambda_2, \mu_1, \mu_2, \sigma^2_1, \sigma^2_2)$, then  $(\lambda_2,\lambda_1, \mu_2, \mu_1, \sigma^2_2, \sigma^2_1)$ is also a solution. This phenomenon is known in statistics as $\lq$label-swapping'. We claim that this symmetry corresponds to a map decomposition of the projection of the incidence variety defined by the system to the moment space. 
In general, for a $k$ mixture model, the $\ell$th moment equation is given by
\begin{align}
    m_{\ell} &= \lambda_1 M_{\ell}(\mu_1, \sigma_1)+\cdots+ 
    \lambda_k M_{\ell}(\mu_k, \sigma_k)
    \label{momenteq}
\end{align}

where $M_\ell (\mu_i, \sigma_i)$ can be calculated recursively as $M_0(\mu_i, \sigma_i) = 1$, $M_1(\mu_i, \sigma_i) = \mu_i$ and $M_\ell (\mu_i, \sigma_i) = \mu_i M_{\ell - 1} + (\ell-1) \sigma_i^2 M_{\ell - 2}$ for $\ell \geq 2$. The \emph{$k$ mixture moment problem} is to find all isolated solutions defined by the system of polynomials in  equations $m_\ell$ for $\ell = 0,\ldots, 3k-1$. Due to the label-swapping symmetry discussed above, $\mathfrak{S}_k$ acts on the solution set given by a $k$ mixture system, partitioning the solutions into equivalence classes of size $k!$.

Beginning with the computation for a mixture of $k=2$ univariate Gaussians and restricting the parameters to a general line 
yields the curve $\cC$. Setting $y=\mu_1+\mu_2$  and eliminating the coordinates 
$\mu_1,\mu_2,\sigma_1^2,\sigma_2^2,\lambda_1,\lambda_2$ is nontrivial. 
Using a combination of substitutions and resultants, after three hours we found the defining equation for $\alpha(\cC)$. This polynomial is dense in bidegree $(9,9)$ consisting of $100$ terms. 

On the other hand, with standard monodromy we tracked 66 paths and 18 complex solutions are obtained. Moreover, if we use Algorithm \ref{algo:standardGeneral} instead with $\alpha(u, \lambda,\mu,\sigma)=(u, \mu_1+\mu_2)$ to decompose the map, we obtain 9 solution classes (of size 2) tracking only 24 paths in a particular instance. 

We also run the analogous computation for a mixture of $k=3$ univariate Gaussians. This includes the variables $\lambda_3,\mu_3,\sigma^2_3$ to the six equations in system (\ref{eq:karlpara}), and we need to include three more moment equations $m_6,m_7,m_8$ to make the system zero-dimensional.  This yields 225 equivalence classes of size $6=3!$ when using the general coordinate $\alpha(\boldsymbol{\lambda},\boldsymbol{\mu},\boldsymbol{\sigma})=\mu_1+\mu_2+\mu_3$. This number coincides with the one found via Gr\"obner bases in \cite{AFS}. 
For general $k$, one has $3k$ variables and a corresponding system of $3k$ moment equations. The fact that this yields a finite number of solutions for generic moments was proved in \cite{algident}. For $k=4$, the conjectured structure of the solutions to the system of twelve variables and twelve equations according to \cite{AFS} consists of 264600 complex solutions arranged in 11025 equivalence classes of size $4!=24$. Combining Algorithms \ref{algo:standardGeneral} and \ref{algo:segreTrace} we are able to disprove this conjecture. 
\begin{compresult}
\label{res:number_sols_gaussian_4_mix}
The number of 
solutions for a $k = 4$ 
mixture model is $248400 = 10350 \cdot 4!$
for generic moments $(m_0,m_1,\dots,m_{11})$.
\end{compresult}
\begin{proof}[Method]
Let $\Fbar:\CC\times\CC^{13}\to \CC^{13}$
be the parametric system in the unknowns $(v,\boldsymbol{\mu},\boldsymbol{\sigma^2})$ 
given by 

\[
\Fbar(t; v, \boldsymbol{\mu},\boldsymbol{\sigma^2})
=\begin{cases}
m_{\ell} - 
(\lambda_1 M_{\ell}(\mu_1, \sigma_1)+\cdots+ \lambda_4 M_{\ell}(\mu_4, \sigma_4)  & \ell=0,\dots11\\
\ell_{1}(v)\cdot \ell_{2}(g(\boldsymbol{\mu},\boldsymbol{\sigma}))+t
\end{cases}
\]
with $L(v)=(m_0,m_1,\dots,m_{11})$ where $L:\CC\to \CC^{12}$ is a general affine linear function and $\ell_1,\ell_2:\CC \to \CC$ are general affine linear functions. 

For $t=0$, 
we find $31815$ solutions and verify this is a complete set of solutions 
up to symmetry using Algorithm~\ref{algo:segreTrace}  with $\epsilon < 10^{-12}$.
Of the $31815$ solutions, $10350$ solutions satisfy $\ell_1(v)=0$.
Since $\ell_1$ is a general affine linear function, all of these solutions have the same $v$-coordinate, say $v^{\star}$.
The $10350$ are solutions for the moment system chosen as $L(v^{\star})$.
\end{proof}

We give computational results for Gaussian $2,3$ and $4$ mixture models in Table~\ref{table:gaussian_mixture_computations}. All computations were performed using \texttt{HomotopyContinuation.jl} \cite{homotopyjl} on a 2018 Macbook Pro with 2.3 GHz Quad-Core Intel Core i5 processor. The timings and number of monodromy loops are an average of $5$ trials. We initiate the trace test in Algorithm~\ref{algo:segreTrace} once there are $10$ loops with no new solutions.

\begin{table}[h!]
\begin{center}
\begin{tabular}{ |c|| c | c| c | c| } 
\hline
$k$& & $2$ & $3$ & $4$ \\ 
\hline
\hline 
 \multirow{2}{8em}{Total degree}& $\#$ paths  & $720$ & $ 362,880 $ & $ 479,001,600$ \\ 
 & time (s) & $0.38$ & $818.60$ &  Est. $3$ days
 \\ 
\hline
 \multirow{2}{8em}{Polyhedral} & $\#$ of paths  & 35 & $4271 $ & \multirow{2}{8em}{cannot compute start system}\\ 
& time (s) & $0.04$ & $22.37$ &  \\ 
\hline 
 \multirow{2}{8em}{Monodromy\\ (degree)} & $\#$ of paths  &  $17$ & $718$   & $32359.4$ \\ 
& time (s) & $0.02$ &  $1.45$ & $166.15$  \\ 
\hline 
\multirow{2}{8em}{Monodromy\\ (trace test)} & $\#$ of paths & $774$ & $15703$ & $921619.25$ \\
& time (s) & $3.27$& $55.66$ & $5592.13$ \\
\hline 
\multirow{2}{8em}{Monodromy\\(separable)} & $\#$ of paths & $700.8$ & $11212.2$  &  $627567.6$ \\
& time (s) & $7.75$   & $29.43$   & $3988.64$\\
\hline 
\end{tabular}
\end{center}
\caption{Number of paths tracked and average time to compute all solutions to Gaussian $k$ mixture moment equations using different homotopy continuation algorithms.}
\label{table:gaussian_mixture_computations}

\end{table}

In Table \ref{table:gaussian_mixture_computations}, the row \emph{Monodromy (degree)} corresponds to applying Algorithm~\ref{algo:standardGeneral} to the system $F$ with stopping criterion once the number of solutions reaches the degree. The row \emph{Monodromy (trace)} corresponds to applying Algorithm~\ref{algo:standardGeneral} to the system $\bar F$ and then doing a trace test with Algorithm~\ref{algo:segreTrace}. The row \emph{Monodromy (separable)} follows the algorithm as explained in Remark~\ref{rk:separable}.

Overall, we observe that Monodromy (degree) majorly outperforms the standard total degree and polyhedral homotopy continuation algorithms. We also see that polyhedral homotopy initially outperforms Monodromy (trace test), but once $k = 4$ polyhedral homotopy becomes computationally untenable. 

\subsection{Algebraic kinematics}
There will be two ideas illustrated in this example. First, decompositions of projections can have physical meaning in kinematics. Second, even with partial information, we are able to construct an $\alpha$ for decomposing the projection.

In this subsection we consider four-bar linkages and Alt's nine-point problem \cite{Alt,NinePoint}.
The first linkage is grounded in the plane at 
the endpoints $\ba_1:=(a_1,\bar a_1)$ and $\ba_2:=(a_2,\bar a_2)$; these endpoints are called the ground pivots. 
Two links with lengths $\ell_1$ and $\ell_2$ will be attached to the respective ground pivots $\ba_1$ and $\ba_2$; the position of the endpoints of these two links are denoted by $\bb_1:=(b_1,\bar b_1)$ and $\bb_2:=(b_2,\bar b_2)$.
The middle linkage is a (coupler) triangle $\overline{{\bb}_1{\bb}_2{\bp}}$ with $\bp:=(p,\bar p)$ called the coupler point of the four bar mechanism.
 The motion of ${\bf{p}}$ is coupled with the motion of the other two linkages.
The angle of motion of links $\overline{\ba_1\bb_1}$, $\overline{\ba_2\bb_2}$, and $\overline{{\bb}_1{\bb}_2{\bp}}$ are given by motion indeterminants  
  $(\theta_1,\bar\theta_1)$, $(\theta_2,\bar\theta_2)$ and  $(\phi,\bar\phi)$ respectively. 
  The motion indeterminants satisfy the angle relations  $\theta_i\bar\theta_i=1$ and  $\phi\bar\phi=1$ and vector loop relations

\begin{equation}\label{vectorLoop}
\begin{array}{ccccc}
\ell_{1}\theta_{1} & = & p+\phi b_{1}-a_{1},\quad & \ell_{1}\bar{\theta}_{1} & =\bar{p}+\bar{\phi} \bar{b}_{1}-\bar{a}_{1},\\
\ell_{2}\theta_{2} & = & p+\phi b_{2}-a_{2},\quad & \ell_{2}\bar{\theta}_{2} & =\bar{p}+\bar{\phi}\bar{b}_{2}-\bar{a}_{2}.
\end{array}
\end{equation}

Thus, the family of four bar linkages (with coupler point $\bp$ and motion) has  twelve
configuration indeterminants $\bK:=(\bp,\ba_1,\ba_2,\bb_1, \bb_2,\ell_1,\ell_2)$ and six motion indeterminants 
$\bM:=(\theta_1,\bar\theta_1,\theta_2,\bar\theta_2,\phi,\bar \phi)$
satisfying \eqref{vectorLoop} and  the  angle relations.

Projecting the family of four bar linkages to the configuration space yields a hypersurface defined by the polynomial 
$f_{cc}(\bp,\ba_1,\ba_2,\bb_1,\bb_2,\ell_1,\ell_2)$ found in Eq. (3.20) in \cite[Section~3.2]{kinAlg}. 
The degree of this polynomial with respect to $\bp$ is six.   
This means, when the indeterminants $\ba_1,\ba_2,\bb_1,\bb_2,\ell_1,\ell_2$ are fixed, the polynomial 
defines a degree six (coupler) curve in the $\bp$ plane. This curve is the set of points through which the coupler point passes through over the range of motions. 

If we restrict $p,\bar p$ to a line parameterized by $v$, then we have a monic univariate polynomial in $v$ whose coefficients are rational functions in the configuration indeterminants. We identify these coefficients with $Y_0,Y_1,Y_2,Y_3,Y_4,Y_5$ in the equation 
\eqref{couplerCurve}.
\begin{equation}\label{couplerCurve}
f_{cc}(L(v),\ba_1,\ba_2,\bb_1, \bb_2,\ell_1,\ell_2)=S^6+Y_5S^5+Y_4S^4+Y_3S^3+Y_2S^2+Y_1S+Y_0
\end{equation}
where
$L:\CC\to \CC^2 $  is a general affine linear function.
A general coupler curve is determined by the values of these six $Y$-coordinates. Since the polynomial is of degree six, there is a degree six map from the family of four bar linkages to the coupler curve space given by $Y$-coordinates. 
We denote this map by $\alpha'(\bK,\bM)$.

Alt's problem is to find the number of coupler curves that pass through a specified nine general points in the plane 
$\bd_i:=(d_i,\bar d_i)$ for $i=1,2,\dots,9$.
One formulation of the problem is to solve  the nine equations $f_{cc,i}(\bd_i,\ba_1,\ba_2,\bb_1, \bb_2,\ell_1,\ell_2)$,  where $\bp$  is set to random points 
 $\bd_i$ for $i=1,2,\dots,9$ in the plane, along with the vector loop relations and angle relations. 
The number of solutions has been found numerically to be $3!\times1442$. The $3!$ comes from the \emph{Robert's cognates} and label swapping symmetry. 
Thus, we can consider the $3!\times1442$ as the degree of the fiber of the projection $\pi$ of the incidence variety of four bar linkages going through nine points to the space of nine points; the incidence variety is in the configuration indeterminants $\bK$, motion indeterminants $\bM$, and indeterminants $\bd_i$ for $i=1,2,\dots,9$.

What we have discussed shows  the projection $\pi$ decomposes into $\alpha \circ \beta$ where 
\[
\alpha(\bd_1\dots,\bd_9; \bK,\bM):=(\bd_1,\dots\bd_9; \alpha'(\bK,\bM))
\text{ and } \beta(\bd_1,\dots\bd_9; Y_0,\dots,Y_5,)=(\bd_1,\dots\bd_9).
\]
Thus, we can use decomposable monodromy to determine a $\beta$ witness set. 
Indeed, simplifying the decomposition by restricting $\bd$-space to a line and taking $\alpha'(\bK,\bM)=Y_5$ we use Algorithm~\ref{algo:standardGeneral} and Algorithm~\ref{algo:segreTrace}  to recover the $1442$ different coupler curves.
In our computation, we  only tracked  $5028$ paths, which is even less than  $3!\times 1442$.

\subsection{Benchmarks with cyclic n-roots} \label{cyclicn}

\begin{table}[htb!]
\begin{center}
\begin{tabular}{ |c|| c | c| c | c|c|c| } 
\hline
$n$& & $5$ & $6$ & $7$ & $8$ & $9$  \\ 
\hline
\hline 
 \multirow{2}{8em}{Total degree}& $\#$ paths  & 120 & $ 720 $ & $5040$ & $40320$ & $362880$  \\ 
 & time (s) & $0.047$ & $0.448$ & $4.682$ & $53.269$ & $677.038$  \\ 
\hline
 \multirow{2}{8em}{Polyhedral} & $\#$ of paths  & $70$ & $156$ & $924$ & $2560$ & $11016$  \\ 
& time (s) & $0.042$ & $0.119$ & $0.717$ & $3.507$ & $21.917$    \\ 
\hline 
 \multirow{2}{8em}{Monodromy (degree)} & $\#$ of paths  & $21.2$ & $38.4$ & $437$ & $483$ & $4294$   \\ 
& time (s) & $0.008$ & $0.038$ & $0.223$ & $0.515$ & $5.567$ \\ 
\hline 
\multirow{2}{8em}{Monodromy (trace test)} & $\#$ of paths & $3664.2$ & $11725.2$ & $97226.2$ & $377716.8$ & $3031284$  \\
& time (s) & $3.450$ & $6.544$ & $81.124$ & $340.425$ & $4854.535$  \\
\hline 
\multirow{2}{8em}{Monodromy (separable)} & $\#$ of paths & $1005.8$ & $3656.8$ & $42034.4$ & $286364.6$ & $673893.6$ \\
& time (s) & $9.562$ & $10.171$ & $66.366$ & $216.27$ & $1118.85$\\
\hline 
\end{tabular}
\end{center}
\caption{Number of paths tracked and average time to compute all solutions to cyclic-$n$ roots using different homotopy continuation algorithms.}\label{table:cyclicroots}
\end{table}

One of the benchmark systems in polynomial system solving is the cyclic
$n$-roots problem. The system has variables $x_0,x_1,\ldots,x_{n-1}$ and parameters $u_0,u_1,\ldots,u_{n-1}$:
\begin{equation}\label{eq:cyclicn}
\begin{array}{lll}
f_0:=x_0 + x_1 + \dots + x_{n-1} + u_0 & = & 0\\
f_1:=x_0 x_1 + x_1 x_2 + \ldots + x_{n-1} x_0 + u_1 &=& 0 \\
\vdots \\
f_{n-2}:=x_0 x_1 \cdots x_{n-2} + \ldots + x_{n-1} x_0 \cdots x_{n-3} + u_{n-2} &=& 0\\
f_{n-1}:=x_0 x_1 \cdots x_{n-1} + u_{n-1} &= & 0 
\end{array}
\end{equation}

The standard cyclic n-roots problem is to  solve the system for a special choice of parameters $u_0=\ldots=u_{n-2}=0$ and $u_{n-1}=-1$. We will consider a variant of this problem where we solve the system for a general choice of parameters. 
For $n=5,6,7$ we  solve the parameterized system of equations, which can be deformed to the special choice of parameters and find all isolated nonsingular solutions. 
In the three cases we considered, the root count for the generic case agrees with the special case (this is no longer true for $n=4,8,9$ \cite{Haagecyclic}).

The system \eqref{eq:cyclicn} is known to have $70$ solutions when  $n=5$.
These solutions split into $7$ groups of $10$ elements via the dihedral action on the coordinates $x_0,x_1,x_2,x_3,x_4$ where rotations  act cyclicly on the labels and a reflection reverses the ordering of the labels. 
Defining equations for the irreducible curve $\cC$ are found  by restricting $u_0,u_1,\dots,u_4$ to a line parameterized by $v$.  

The projection $\pi:\cC\to\CC$ decomposes into $ \alpha \circ \beta$ where  $$
\alpha(v,x)=(v, x_3x_0+x_4x_1+x_0x_2+x_1x_3+x_2x_4)\, \text{ and } \, \beta(v,y)=v.
$$ To find this decomposition, we note that the system is invariant under the dihedral group $D_5$ which acts by label swapping the $x$-coordinates. 
We  use the Reynolds operator on the monomial $x_0x_2$, i.e.,
$$\frac{1}{|D_n|}\sum_{\sigma\in D_n}\sigma(x_0x_2)=x_{n-2}x_{0}+x_{n-1}x_{1}+x_0x_2+x_1x_3+\cdots+x_{n-3}x_{n-1}\quad(n=5).$$

One might be tempted to take $\alpha(v,x)=\left(v, \sum_{\sigma\in D_5}\sigma(x_0)\right)$ or $\alpha(v,x)=\left(v, \sum_{\sigma\in D_5}\sigma(x_0x_1)\right).$
However, such choices lead to $\alpha$ having degree $70$ as the entire fiber is mapped to a single point under $\alpha$; this means $\beta$ has degree $1$, and we fail to nontrivially decompose the projection. 

We summarize our computations for this subsection in Table \ref{table:cyclicroots}. The different homotopy continuation methods are as in Table~\ref{table:gaussian_mixture_computations}. We see again that Monodromy (degree) is much faster than total degree and polyhedral homotopies. 

\section*{Acknowledgements}
We would like to thank Bernd Sturmfels, Jonathan Hauenstein, Anton Leykin, Gunter Malle, and Botong Wang for their helpful comments and suggestions. 

\bibliographystyle{abbrv}
\bibliography{final_references.bib}

\begin{thebibliography}{10}

\bibitem{Alt}
H.~Alt.
\newblock {\"U}ber die {E}rzeugung gegebener ebener {K}urven mit {H}ilfe des
  {G}elenkvierecks.
\newblock {\em Zeitschrift f\"ur Angewandte Mathematik und Mechanik},
  3(1):13--19, 1923.

\bibitem{AFS}
C.~{Am\'endola}, J.-C. {Faug\`ere}, and B.~{Sturmfels}.
\newblock {Moment varieties of Gaussian mixtures}.
\newblock {\em Journal of Algebraic Statistics}, 7:14--28, 2016.

\bibitem{algident}
C.~Am{\'e}ndola, K.~Ranestad, and B.~Sturmfels.
\newblock Algebraic identifiability of gaussian mixtures.
\newblock {\em International mathematics research notices},
  2018(21):6556--6580, 2018.

\bibitem{Bertini}
D.~J. Bates, J.~D. Hauenstein, A.~J. Sommese, and C.~W. Wampler.
\newblock Bertini: Software for numerical algebraic geometry.
\newblock Available at \url{bertini.nd.edu} with permanent doi:
  dx.doi.org/10.7274/R0H41PB5, 2006.

\bibitem{homotopyjl}
P.~Breiding and S.~Timme.
\newblock Homotopycontinuation. jl: A package for homotopy continuation in
  julia.
\newblock In {\em International Congress on Mathematical Software}, pages
  458--465. Springer, 2018.

\bibitem{kileelchen}
J.~Chen and J.~Kileel.
\newblock Numerical implicitization.
\newblock {\em Journal of Software for Algebra and Geometry}, 9(1):55--63,
  2019.

\bibitem{DelCampo}
A.~M. del Campo and J.~I. Rodriguez.
\newblock Critical points via monodromy and local methods.
\newblock {\em Journal of Symbolic Computation}, 79(3):559 -- 574, 2017.
\newblock SI: Numerical Algebraic Geometry.

\bibitem{DK2002CIT}
H.~Derksen and G.~Kemper.
\newblock {\em Computational invariant theory}.
\newblock Encyclopaedia of mathematical sciences. Springer, Berlin, New York,
  2002.

\bibitem{polsyshommon}
T.~Duff, C.~Hill, A.~Jensen, K.~Lee, A.~Leykin, and J.~Sommars.
\newblock Solving polynomial systems via homotopy continuation and monodromy.
\newblock {\em IMA Journal of Numerical Analysis}, 39(3):1421--1446, 2019.

\bibitem{Haagecyclic}
U.~Haagerup.
\newblock Cyclic p-roots of prime lengths p and related complex {H}adamard
  matrices.
\newblock {\em arXiv:0803.2629}, 2008.

\bibitem{harris1979}
J.~Harris.
\newblock Galois groups of enumerative problems.
\newblock {\em Duke Math. J.}, 46(4):685--724, 12 1979.

\bibitem{tensorDecomp}
J.~D. Hauenstein, L.~Oeding, G.~Ottaviani, and A.~J. Sommese.
\newblock Homotopy techniques for tensor decomposition and perfect
  identifiability.
\newblock {\em Journal f{\"u}r die reine und angewandte Mathematik},
  2019(753):1--22, 2019.

\bibitem{HRmulti}
J.~D. Hauenstein and J.~I. Rodriguez.
\newblock Multiprojective witness sets and a trace test.
\newblock {\em Adv. Geom.}, 20(3):297--318, 2020.

\bibitem{galois}
J.~D. Hauenstein, J.~I. Rodriguez, and F.~Sottile.
\newblock Numerical computation of {G}alois groups.
\newblock {\em Foundations of Computational Mathematics}, Jun 2017.

\bibitem{Witness}
J.~D. Hauenstein and A.~J. Sommese.
\newblock Witness sets of projections.
\newblock {\em Applied Mathematics and Computation}, 217(7):3349--3354, 2010.

\bibitem{HSpseudo}
J.~D. Hauenstein and A.~J. Sommese.
\newblock Membership tests for images of algebraic sets by linear projections.
\newblock {\em Appl. Math. Comput.}, 219(12):6809--6818, Feb. 2013.

\bibitem{Deflation}
J.~D. Hauenstein and C.~W. Wampler.
\newblock Isosingular sets and deflation.
\newblock {\em Foundations of Computational Mathematics}, 13(3):371--403, 2013.

\bibitem{HomPS}
T.~Lee, T.~Li, and C.~Tsai.
\newblock Hom4ps-2.0: A software package for solving polynomial systems by the
  polyhedral homotopy continuation method, 2008.

\bibitem{tracetest}
A.~Leykin, J.~I. Rodriguez, and F.~Sottile.
\newblock Trace test.
\newblock {\em Arnold Math. J.}, 4(1):113--125, 2018.

\bibitem{LS09Galois}
A.~Leykin and F.~Sottile.
\newblock {Galois groups of Schubert problems via homotopy computation}.
\newblock {\em Math. Comput.}, 78(267):1749--1765, 2009.

\bibitem{LeykinDeflate}
A.~Leykin, J.~Verschelde, and A.~Zhao.
\newblock Newton's method with deflation for isolated singularities of
  polynomial systems.
\newblock {\em Theoretical Computer Science}, 359(1):111 -- 122, 2006.

\bibitem{dhagash}
D.~Mehta, Y.-H. He, and J.~D. Hauenstein.
\newblock Numerical algebraic geometry: a new perspective on gauge and string
  theories.
\newblock {\em Journal of High Energy Physics}, 2012(7):1--32, 2012.

\bibitem{powerbus}
D.~K. Molzahn, M.~Niemerg, D.~Mehta, and J.~D. Hauenstein.
\newblock Investigating the maximum number of real solutions to the power flow
  equations: Analysis of lossless four-bus systems.
\newblock {\em arXiv:1603.05908}, 2016.

\bibitem{Pirola}
G.~P. Pirola and E.~Schlesinger.
\newblock Monodromy of projective curves.
\newblock {\em J. Algebraic Geom.}, 14(4):623--642, 2005.

\bibitem{Monodromy}
A.~Sommese, J.~Verschelde, and C.~Wampler.
\newblock Using monodromy to decompose solution sets of polynomial systems into
  irreducible components.
\newblock In {\em Applications of algebraic geometry to coding theory, physics
  and computation ({E}ilat, 2001)}, volume~36 of {\em NATO Sci. Ser. II Math.
  Phys. Chem.}, pages 297--315. Kluwer Acad. Publ., Dordrecht, 2001.

\bibitem{TraceTesto}
A.~Sommese, J.~Verschelde, and C.~Wampler.
\newblock Symmetric functions applied to decomposing solution sets of
  polynomial systems.
\newblock {\em SIAM J. Numer. Anal.}, 40(6):2026--2046, 2002.

\bibitem{SWbook}
A.~J. Sommese and C.~W. Wampler, II.
\newblock {\em The numerical solution of systems of polynomials}.
\newblock World Scientific Publishing Co. Pte. Ltd., Hackensack, NJ, 2005.
\newblock Arising in engineering and science.

\bibitem{Sturmfels2008AIT}
B.~Sturmfels.
\newblock {\em Algorithms in Invariant Theory (Texts and Monographs in Symbolic
  Computation)}.
\newblock Springer Publishing Company, Incorporated, 2nd ed.; vii, 197 pp.; 5
  figs. edition, 2008.

\bibitem{PHC}
J.~Verschelde.
\newblock Phcpack: a general-purpose solver for polynomial systems by homotopy
  continuation.

\bibitem{NinePoint}
C.~Wampler, A.~Morgan, and A.~Sommese.
\newblock Complete solution of the nine-point path synthesis problem for
  four-bar linkages.
\newblock {\em ASME J. Mech. Design}, 114:153--159, 1992.

\bibitem{kinAlg}
C.~W. Wampler and A.~J. Sommese.
\newblock Numerical algebraic geometry and algebraic kinematics.
\newblock {\em Acta Numerica}, 20:469–567, May 2011.

\end{thebibliography}

\end{document}